\PassOptionsToPackage{square,comma,numbers,sort&compress}{natbib} 
\documentclass{article}


\usepackage[final]{neurips_2021}
\usepackage{bbding} 

\usepackage[utf8]{inputenc} 
\usepackage[T1]{fontenc}    
\usepackage{url}            
\usepackage{booktabs}       
\usepackage{amsfonts}       
\usepackage{nicefrac}       
\usepackage{microtype}      
\usepackage{wrapfig}


\usepackage{amssymb}

\usepackage{amsmath}
\usepackage{amsfonts}
\usepackage{algorithm}
\usepackage{algorithmic}

\newtheorem{theorem}{Theorem}
\newtheorem{lemma}{Lemma}

\newtheorem{definition}{Definition}
\newtheorem{note}{Note}

\newtheorem{conjecture}{Conjecture}
\newtheorem{proof}{Proof}

\usepackage{times}
\usepackage{epsfig}
\usepackage{graphicx}
\usepackage{amsmath}
\usepackage{amssymb}

\usepackage{amsfonts}
\usepackage{algorithm}
\usepackage{algorithmic}
\usepackage{color}

\usepackage{lineno,hyperref}
\usepackage{amsfonts}
\usepackage{multirow}
\usepackage{bm}

\usepackage{diagbox}
\usepackage{multirow}

\usepackage{times}
\usepackage{soul}
\usepackage{url}
\usepackage{algorithm}
\usepackage{algorithmic}
\urlstyle{same}

\usepackage{subfigure}
\usepackage{lineno,hyperref}
\usepackage{multirow}
\usepackage{bm}
\modulolinenumbers[1]
\usepackage{rotating}
\usepackage{booktabs}
\usepackage{makecell}

\usepackage{wrapfig,lipsum,booktabs}

\bibliographystyle{abbrvnat}

\usepackage{amsfonts}       
\usepackage{nicefrac}       
\usepackage{microtype}      
\usepackage{xcolor}         

\title{Special Stable Matrices and Their Non-square Counterpart}

%

\author{ Steven W. Su $^1$ $^*$
\AND
\thanks{$^{*}$ The co-responding author.}
\thanks{$^{1}$ College of Artificial Intelligence and Big data for Medical Sciences, Shandong First Medical University
Shandong Academy of Medical Sciences, P. R. China.}
\thanks{$^{2}$ University of Technology, Sydney, Australia.}  
}

\begin{document}

\maketitle

\begin{abstract}
In this note, we discuss the extension of several important stable square matrices, e.g., D-stable matrices, diagonal dominance matrices, Volterra-Lyapunov stable matrices, to their corresponding non-square matrices. The extension is motivated by some distributed control-related problems, such as decentralized unconditional stability and decentralized integral controllability for non-square processes. We will provide the connections of conditions between these special square matrices and their associated non-square counterparts. Some conjectures for these special matrices are proposed for future research.    
\end{abstract}

\section{INTRODUCTION}

In various applications, the stability of a system under different structural variations and constraints is a major concern in relevant research areas. For example, in the linearization of diffusion models in biological systems at constant equilibrium, strong stable matrices are proposed \cite{CROSS1978253, hadeler2006nonlinear}. The field of system control and its applications, including decentralized stability \cite{sun2023gallery, Wang:-On, hadeler2006nonlinear, johnson1974sufficient, goh1977global, iggidr2023limits, barker1978positive}, has motivated research into the stability of various matrices, such as P-matrices, D-stable matrices, and Volterra-Lyapunov stable matrices, which are also significant in other areas like economics.

In the process control industry, decentralized or distributed control is often preferred over centralized control due to its simplicity and fault tolerance. As a result, the configuration of decentralized control has been extensively investigated. Topics such as Decentralized Unconditional Stability and Decentralized Controllability have been well explored \cite{anderson1981algebraic, Anderson:-Alge, Sko:-ariab, grosdidier1986interaction, Su:-Analy}.

In some cases, particularly in applications involving non-square systems, the stability of some special real matrices associated with these systems must be addressed \cite{zhang2017multiloop, WHEATON20171, zhiteckii2022robust, sujatha2022control, steentjes2022data}. This need has led to the extension of stability concepts to non-square matrices. This note focuses exclusively on real matrices, with the potential for extension to complex matrices via frequency-based techniques.

This study investigates the connections between the stability of square matrices and their non-square counterparts and presents conjectures for future research.

\section{DECENTRALIZED STABILIZATION FOR NON-SQUARE SYSTEMS AND MATRICES}

The motivation for the extension of the decentralized stability results from square matrices to non-square matrices is due to the exploration of the decentralized unconditional stability (DUS)\cite{Sko:-ariab} \cite{grosdidier1986interaction} \cite{Su:-Analy} of non-square processes. A sufficient DUS condition is the D-stability alike requirement for the system steady state gain matrix, which is a non-square real matrix for the investigation of non-square processes. We present the D-stability-like sufficient condition of non-square matrices here for the decentralized unconditional stability. As we mainly concern with the investigation of real non-square matrices in this note, we will not provide the details about how to use this condition to ensure DUS of non-square processes. We mainly focus on the connection of DUS conditions of the steady state transfer functions between non-square and its related square systems. Suppose the readers are interested in the proof of this sufficient DUS condition. In that case, we recommend reading the existing relevant references \cite{Sko:-ariab} \cite{grosdidier1986interaction} \cite{Su:-Analy} about the proof of the sufficient DUS condition for square systems via singular perturbation analysis \cite{Kokotovic:-Sing}. 

We present the sufficient DUS condition as follows.

\begin{theorem} \label{theorem1}
For a proper and asymptotically stable non-square system $H(s) \in \mathbb{C}^{m \times n}$, we define its steady state transfer matrix as $A=H(0) \in \mathbb{R}^{n \times m}$. If there exists a 
 block diagonal non-square matrix $K \in \mathbb{C}^{n \times m}$ such that for all non-negative diagonal matrices $E \in  \mathbb{R}^{n \times n}$, and $j \in \mathcal{M}$ \cite{campo1994achievable},
\begin{equation}\label{suf_con}
Re\{ \sigma_i ( [A E K]_j ) \} >0,
\end{equation}
where $\mathcal{M}$ is the index set consisting of $k$ tuples of integers in the range $1,\cdots, m$, and $\text{Re} \{ \sigma_i (M) \}$ represents the real part of the $i$-th eigenvalue of matrix $M$, 

then the nonsquare system can be controlled by a distributed integral controller and achieve Decentralized Unconditional Stability.
\end{theorem}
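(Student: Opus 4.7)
The plan is to transplant the singular-perturbation argument that establishes the D-stability-like sufficient DUS condition for square plants \cite{Su:-Analy, grosdidier1986interaction, Sko:-ariab, Kokotovic:-Sing} and to combine it with a combinatorial reduction over the index set $\mathcal{M}$ that accommodates the non-square structure. First, I close the plant with a distributed integral controller $C(s) = \epsilon K / s$, where $K$ is the block-diagonal non-square gain from the hypothesis and $\epsilon > 0$ is a small scalar that plays the role of a singular-perturbation parameter. Since $H(s)$ is proper and asymptotically stable, the resulting closed-loop system admits a standard two-time-scale decomposition: the fast subsystem inherits its stability from $H(s)$, and the slow dynamics are driven by the integrator states with system matrix governed, as $\epsilon \to 0$, by the steady-state gain $A = H(0)$ acting through $K$.

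Next, I would fold in the two ``unconditional'' quantifiers. Decentralized Unconditional Stability demands robustness against arbitrary non-negative scaling of each integral loop gain (i.e.\ any subset of loops may be detuned or switched off), which is precisely captured by the diagonal matrix $E \geq 0$; the actual slow system matrix therefore takes the form $-AEK$ up to the sign convention appropriate for negative feedback. Because $K$ is block-diagonal and non-square, the selection of which $k$ of the $m$ plant input channels carry active integral action is not unique, and each admissible selection is labelled by an index $j \in \mathcal{M}$; for each such $j$ the effective square slow-subsystem matrix is the $k\times k$ submatrix $[AEK]_j$. Invoking the Klimushchev--Krasovskii-type singular perturbation stability theorem from \cite{Kokotovic:-Sing}, it then suffices that every such $[AEK]_j$ be Hurwitz (equivalently, has eigenvalues with strictly positive real parts before the sign flip), uniformly in $E$ --- which is exactly the hypothesis \eqref{suf_con}. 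Stability of the full closed loop for all sufficiently small $\epsilon$ follows, and this is the sought DUS property.

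The main obstacle, and the only place where the non-square case genuinely departs from the square DUS results in \cite{Su:-Analy, grosdidier1986interaction, Sko:-ariab}, is the bookkeeping in the reduction step: one must verify that the index set $\mathcal{M}$ exactly exhausts the $k$-tuples of input channels that can arise as the ``active'' loops of a block-diagonal $K \in \mathbb{C}^{n\times m}$, and that for each such tuple the corresponding slow manifold is well defined so that the reduced-order system really is $[AEK]_j$. Once this structural layer is cleared, the singular perturbation argument proceeds loop-selection-by-loop-selection just as in the square case, and the non-square DUS conclusion follows by intersecting the admissible $\epsilon$-ranges over the finite family $\mathcal{M}$.
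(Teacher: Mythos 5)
Your proposal follows exactly the route the paper itself indicates: the paper gives no proof of Theorem~\ref{theorem1} at all, explicitly deferring to the singular-perturbation arguments for the square-system DUS condition in the cited references, and that two-time-scale argument (fast modes inherited from the stable $H(s)$, slow integrator dynamics governed by $-AEK$ as $\epsilon \to 0$, intersected over the loop selections $j \in \mathcal{M}$) is precisely what you sketch. Note only that the step you flag as the ``main obstacle'' --- verifying that $\mathcal{M}$ exhausts the admissible active-loop selections of a block-diagonal non-square $K$ and that each reduced slow subsystem really is $[AEK]_j$ --- is likewise left unaddressed by the paper, so your outline is at least as complete as what the paper provides.
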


Condition (\ref{suf_con}) in Theorem \ref{theorem1} describes the characteristic of a non-square D-stable alike matrix. Using Theorem \ref{theorem1} as the starting point, we investigate the D-stable conditions between non-square and square matrices now. To formalize our discussion, we define the structure of the $K$ matrix. According to this structure, we will define the squared matrices for the non-square matrices $A$ and $E$.    

First, we assume the structure of the block diagonal $K$ matrix as follows:
\begin{equation}\label{integral_matrix}
K=
\begin{bmatrix}
    k_{1,1} & \cdots & k_{1, p_1} & 0 & \cdots & 0 & \cdots & 0 & \cdots & 0 \\
    0 & \cdots & 0 & k_{2, 1} & \cdots & k_{2, p_2} & \cdots & 0 & \cdots & 0\\
    \vdots & \vdots & \ddots & \vdots & \vdots & \ddots & \vdots & \vdots & \ddots & \vdots \\
    0 & \cdots & 0 & 0 & \dots & 0 & \cdots & k_{m, 1} & \cdots & k_{m, p_m}
\end{bmatrix}^T
\end{equation}
where $\sum_{i=1}^{m} p_i = n$, and all the elements $k_{i,j}$ are non-negative.

Considering the non-negative diagonal matrix $ E= diag\{ [\varepsilon _{1,1}, \cdots,
  \varepsilon_{m,p_m}] \} \,\,\, \,\, $ with $\varepsilon _{i} \ge 0 $, we have 
\begin{equation}\label{integral_matrix_E}
\begin{aligned}
\bar K& = E K=\\
&\begin{bmatrix}
    k_{1,1} \varepsilon _{1,1} & \cdots & k_{1, p_1} \varepsilon _{1, p_1} & 0 & \cdots & 0 & \cdots & 0 & \cdots & 0 \\
    0 & \cdots & 0 & k_{2, 1} \varepsilon _{2, 1} & \cdots & k_{2, p_2} \varepsilon _{2, p_2} & \cdots & 0 & \cdots & 0\\
    \vdots & \vdots & \ddots & \vdots & \vdots & \ddots & \vdots & \vdots & \ddots & \vdots \\
    0 & \cdots & 0 & 0 & \dots & 0 & \cdots & k_{m, 1} \varepsilon _{m,1} & \cdots & k_{m, p_m} \varepsilon _{m, p_m}
\end{bmatrix}^T
\end{aligned}
\end{equation}  

According to the structure of $K$, we redefine the sub-indices of the $n$ columns of $A$ as follows: 
$$A=[\boldsymbol{a}_{1,1}, \cdots, \boldsymbol{a}_{1,p_1}, \boldsymbol{a}_{2,1}, \cdots, \boldsymbol{a}_{2,p_2}, \cdots, \cdots, \boldsymbol{a}_{m,1} \cdots,\boldsymbol{a}_{m,p_m}],$$ 
then $AEK$ as shown in (\ref{suf_con}) can be expressed as 
\begin{equation}\label{eq_li_comb}
AEK = [ \sum_{i=1}^{p_1} \varepsilon_{1,i} k_{1,i} \boldsymbol{a}_{1,i}, \sum_{i=1}^{p_2} \varepsilon_{2, i} k_{2,i} \boldsymbol{a}_{2,i}, \cdots, \sum_{i=1}^{p_m} \varepsilon_{m,i} k_{m,i} \boldsymbol{a}_{m,i}].
\end{equation}

Now, based on the structure (\ref{integral_matrix}), we define the Squared Matrices for the matrices $A$ and $K$ as follows.





\begin{definition} (Squared Matrices) \label{squaredmatrix}
Consider a non-square matrix $A \in \mathbb{R}^{m \times n}$ and its associated block diagonal matrix $K \in \mathbb{R}^{m \times n}$, as defined in Equation (\ref{integral_matrix}). We describe a procedure to construct squared matrices from $A$ and $K$ regarding the values of $\varepsilon_{i,j}$.

First, for the case where the dimension of the squared matrix is $m$, i.e., $\forall i \in \{1, 2, \cdots, m\}$, we select only one column from each $\boldsymbol{a}_{i, j}$, where $j \in \{1, 2, \cdots, p_i\}$. This corresponds to the scenario where, for a given $i$, exactly one of the corresponding $\varepsilon_{i,j} \ne 0$, $j \in \{1, 2, \cdots, p_i\}$. We can construct $N = \prod_{i=1}^{m} p_i$ such squared matrices for both $A$ and $K$, denoted as $[A]^m_{s_i}$ and $[K]^m_{s_i}$, respectively, for $i \in \{1, 2, \cdots, N\}$.

Second, when the dimension of the squared matrices is less than $m$, corresponding to the scenario where, for one or more $i \in \{1, 2, \cdots, m\}$, $\varepsilon_{i,j} = 0$ for all $j \in \{1, 2, \cdots, p_i\}$, we can construct squared matrices of dimension $k < m$. These are denoted as $[A]^k_{s_i}$ and $[K]^k_{s_i}$.

These squared matrices are referred to as the square counterparts of the non-square matrices $A$ and $K$.
\end{definition}

In Definition \ref{squaredmatrix},  regarding the original decentralized stabilization problem, the reduction in the dimension of matrices occurs in cases where all the control inputs for certain columns have zero gain. In other words, for some index $i$, if $\varepsilon_{i,j} = 0$ for all $j \in {1, 2, \cdots, p_i}$, it corresponds to the removal of a control input from service, as discussed in \cite{campo1994achievable}.

Based on the definition of the squared matrices (Definition \ref{squaredmatrix}), we present the connection between square and non-square matrices. 

First, we introduce two well-known stable square matrices.

\begin{definition} \cite{Cross:-Thre}
The matrix $A=(a_{ij}) \in \mathbb{R}^{n \times n}$ is said to be

(1) D-stable if $DA$ is stable for all diagonal matrix $D>0$;

(2) Volterra-Lyapunov stable if there exists a diagonal matrix $D >0$ for which $AD + DA^T > 0$.
\end{definition}

\begin{definition} \cite{KUSHEL2021204}
A square matrix $A$ is said to be strictly column diagonally dominant if for each row $i$, the absolute value of the diagonal element is greater than the sum of the absolute values of the other elements in the same column:
\[
|a_{ii}| \geq \sum_{j > i} |a_{ij}| \quad \text{for all } i.
\]    
\end{definition}

\begin{definition}
A matrix $A$ is called strictly column diagonally balanced dominant if there exists a diagonal matrix $D$ such that $|AD| + |D A^T|$ is strictly column diagonally dominant. Furthermore, if all the diagonal elements of $AD + D A^T $ are positive then the matrix $A$ is called positive diagonally balanced dominant. 
\end{definition}

\begin{lemma} (\cite{Cross:-Thre}) \label{lemma2} 
If a matrix $A=(a_{ij}) \in \mathbb{R}^{n \times n}$ is Volterra-Lyapunov stable, then, for any diagonal positive matrix $D > 0$ and $i \in \mathcal{M}$, where $\mathcal{M}$ is the index set consisting of $k$ tuples of integers in the
range $1,\cdots, m$,
$$Re\{ \sigma [AD]_i \} >0,$$
which implies that $[A]_i$, for $i \in \mathcal{M}$,  are all D-stable matrices.
\end{lemma}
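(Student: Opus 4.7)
The plan is to split the lemma into two conceptual components and chain them together. First, I would show that Volterra--Lyapunov stability is inherited by every principal submatrix of $A$. Second, I would show the classical fact that Volterra--Lyapunov stability implies D-stability. Combining the two gives D-stability of each $[A]_i$, and as a byproduct the eigenvalue condition $Re\{\sigma[AD]_i\} > 0$ for every positive diagonal $D$.

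For the inheritance step, fix a positive diagonal witness $D_0$ with $A D_0 + D_0 A^T > 0$. For any index tuple $i \in \mathcal{M}$, the principal submatrix $[A D_0 + D_0 A^T]_i$ is positive definite, since principal submatrices of positive definite matrices are positive definite. Because $D_0$ is diagonal, row/column selection commutes with multiplication by $D_0$, giving $[A D_0]_i = [A]_i [D_0]_i$ and $[D_0 A^T]_i = [D_0]_i [A]_i^T$. Hence $[A]_i [D_0]_i + [D_0]_i [A]_i^T > 0$, so $[A]_i$ is itself Volterra--Lyapunov stable with diagonal witness $[D_0]_i$.

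For the implication ``V--L stable $\Rightarrow$ D-stable,'' I would invoke the Lyapunov criterion: a square matrix $M$ satisfies $Re\{\sigma(M)\} > 0$ whenever $M Q + Q M^T > 0$ for some positive definite $Q$. Given Volterra--Lyapunov stability of $M$ with diagonal witness $D_M$ and an arbitrary positive diagonal $D$, take $Q = D_M D^{-1}$; this is a product of two commuting positive diagonal matrices, hence is positive diagonal and thus positive definite. A one-line calculation exploiting the commutativity of diagonal matrices yields $(MD) Q + Q (MD)^T = M D_M + D_M M^T > 0$, so the eigenvalues of $MD$ have positive real parts. Applying this with $M = [A]_i$ and restricting $D$ to the corresponding index set $i$ delivers $Re\{\sigma[AD]_i\} > 0$; letting $D$ range over all positive diagonal matrices then gives the D-stability of $[A]_i$.

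The main obstacle is bookkeeping rather than deep insight: one must verify carefully that the principal-submatrix operation commutes with multiplication by diagonal matrices (which is precisely why the argument relies on $D_0$ being diagonal, not merely positive definite), and confirm that $Q = D_M D^{-1}$ is genuinely positive definite so the Lyapunov criterion applies. Beyond the Lyapunov stability theorem and the elementary fact that principal submatrices of positive definite matrices remain positive definite, no further machinery is required.
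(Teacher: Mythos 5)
The paper offers no proof of this lemma at all --- it is quoted from \cite{Cross:-Thre} as a known result --- so there is no in-paper argument to compare against; your proposal supplies the standard proof and it is correct. Both halves check out: heredity of the Volterra--Lyapunov property to principal submatrices works exactly because $D_0$ is diagonal, so $[AD_0]_i=[A]_i[D_0]_i$, and the choice $Q=D_M D^{-1}$ in the Lyapunov criterion gives $(MD)Q+Q(MD)^T=MD_M+D_MM^T>0$ by commutativity of diagonal matrices, yielding $Re\{\sigma(MD)\}>0$. The only caveat is notational, inherited from the paper's own sloppy statement (an $n\times n$ matrix indexed by tuples from $1,\dots,m$): when you apply the second step to $[A]_i$ you must pair it with $[D]_i$, which you do, so the argument is complete.
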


\begin{definition} \label{def_NSQ_lyp_ind}
For a real nonsquare matrix $A \in \mathbb{R}^{m \times n}$, if one can find an individual diagonal positive matrix  $D_i > 0$, so that all its $m$-order squared matrices (see Definition \ref{squaredmatrix}) $[A]^m_{s_i} \in \mathbb{R}^{m \times m}$  satisfy the following LMIs,
$$[A]^m_{s_i} D_i + D_i ([A]^m_{s_i})^T  > 0, $$
then, we call the non-square matrix $A$ an individual Volterra-Lyapunov stable matrix.
\end{definition}

\begin{definition} \label{def_NSQ_lyp}
For a real nonsquare matrix $A \in \mathbb{R}^{m \times n}$, if one can find a diagonal positive matrix  $D > 0$, so that all its $m$-order squared matrices (see Definition \ref{squaredmatrix}) $[A]^m_{s_i} \in \mathbb{R}^{m \times m}$  satisfy the following LMIs,
$$[A]^m_{s_i} D + D ([A]^m_{s_i})^T  > 0, $$
then, we call the non-square matrix $A$ a simultaneous Volterra-Lyapunov stable matrix.
\end{definition}

\begin{definition} \label{def_CDD_lyp}
For a real nonsquare matrix $A \in \mathbb{R}^{m \times n}$, if one can find a diagonal positive matrix  $D > 0$, so that 
$$ [A]^m_{s_i} D + D ([A]^m_{s_i})^T $$
are all strictly column diagonally dominant and all their diagonal elements are positive ( where $[A]^m_{s_i}$ is the $m$-order squared matrices (see Definition \ref{squaredmatrix}) of $A$), then, we call the non-square matrix $A$ simultaneous positive diagonally balanced dominant.
\end{definition}

\begin{note}
It is easy to see that a non-square matrix $A$ is simultaneous Volterra-Lyapunov stable if it is simultaneous positive diagonally balanced dominant.
\end{note}

\begin{lemma} \label{pailiezuhe_Lemma}
Consider $m$ groups of cards, where each group contains $p_i$ distinct cards. A player can place a bet on each combination of $m$ cards, one card from each group. The payoff for a specific combination is denoted as $\gamma(\zeta_1, \zeta_2, \cdots, \zeta_m) > 0$. We record the payoff for each card in a constructed combination as $\frac{\gamma(\zeta_1, \zeta_2, \cdots, \zeta_m)}{m}$. Suppose the player desires a specific ratio $\kappa_{kj} > 0$ between the $j_{th}$ card of group $k$ and the first card of that group.

If the set of payoffs $\gamma(\zeta_1, \zeta_2, \cdots, \zeta_m)$ satisfies the following conditions, then the player can implement their desired strategy:

\begin{equation}\label{sub_sssa}
\begin{cases}
\begin{aligned}
&\gamma_{(1, 1, \cdots, 1)} = 1 \\
&\gamma_{( \zeta_1, \zeta_2, \cdots, \zeta_{(i-1)}, j, \zeta_{(i+1)}, \cdots, \zeta_{m})} = \gamma_{( \zeta_1, \zeta_2, \cdots, \zeta_{(i-1)}, 1, \zeta_{(i+1)}, \cdots, \zeta_{m})} \kappa_{ij},
\end{aligned}
\end{cases}
\end{equation}
\end{lemma}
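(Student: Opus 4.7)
The plan is to first parse what "implement the desired strategy" means in operational terms. The natural reading — and the one consistent with the sum (\ref{eq_li_comb}) that the lemma is evidently preparing for — is that the total payoff accumulated by the $j$-th card of group $k$ (summed over every $m$-tuple containing it) stands in ratio $\kappa_{kj}$ to the total payoff of the first card of that group. With this in mind, for each $k$ and $j$ I would define
$$T_{k,j} := \frac{1}{m}\sum_{(\zeta_l)_{l\ne k}} \gamma(\zeta_1,\ldots,\zeta_{k-1},j,\zeta_{k+1},\ldots,\zeta_m),$$
so the goal reduces to showing $T_{k,j} = \kappa_{kj}\,T_{k,1}$ for every admissible $k$ and $j$.

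The core step is then a one-line manipulation. Applying the second line of (\ref{sub_sssa}) termwise pulls the common factor $\kappa_{kj}$ out of each summand defining $T_{k,j}$, and what remains is exactly $m\,T_{k,1}$. Because $\kappa_{kj} > 0$ and $\gamma_{(1,\ldots,1)} = 1$, all quantities involved are strictly positive, so the identity $T_{k,j}/T_{k,1} = \kappa_{kj}$ is meaningful and follows immediately. This is the content of the implementability claim.

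Before concluding, I would check that the system (\ref{sub_sssa}) is actually consistent, i.e.\ that reaching a tuple $(\zeta_1,\ldots,\zeta_m)$ from $(1,\ldots,1)$ by changing coordinates in different orders produces the same value of $\gamma$. The cleanest route is to exhibit an explicit solution: adopting the convention $\kappa_{i,1}=1$ (implicit in the statement since a card compared to itself has ratio $1$) and setting
$$\gamma(\zeta_1,\ldots,\zeta_m) = \prod_{i=1}^{m}\kappa_{i,\zeta_i},$$
both conditions in (\ref{sub_sssa}) can be verified by inspection, and the product form makes the update order irrelevant, so no contradiction arises. The only real obstacle here is interpretational rather than analytical: the lemma is stated in informal betting language and does not explicitly write the target equality. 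Once "desired strategy" is pinned down as the ratio identity on the $T_{k,j}$, the proof reduces to interchanging summation with the multiplicative factor $\kappa_{kj}$, together with the product-formula witness for existence.
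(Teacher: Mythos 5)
Your proposal is correct and follows essentially the same route as the paper: interpret the per-card payoff as the sum $T_{k,j}$ over all tuples containing that card, then pull the factor $\kappa_{kj}$ out of the sum termwise via the second line of (\ref{sub_sssa}) to obtain $T_{k,j}=\kappa_{kj}T_{k,1}$. Your additional step of verifying consistency of the recursive conditions by exhibiting the explicit solution $\gamma(\zeta_1,\ldots,\zeta_m)=\prod_{i=1}^{m}\kappa_{i,\zeta_i}$ (with $\kappa_{i,1}=1$) is a worthwhile supplement, since the paper only asserts without argument that all $\gamma$ ``can be set as a positive real value.''
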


\begin{proof}
We will prove the desired strategy can be achieved by constructing a set of payoffs that satisfies the conditions in equation \eqref{sub_sssa}.

1. \textbf{Normalization Condition:} \\
First, according to Condition (\ref{sub_sssa}), we easily see that all $\gamma(\zeta_1, \zeta_2, \cdots, \zeta_m)$ can be set as a positive real value.
The condition $\gamma_{(1, 1, \cdots, 1)} = 1$ ensures that the payoff for choosing one card from each group, where all cards are the first cards of their respective groups, is normalized to 1. This establishes a baseline.

2. \textbf{Ratio Preservation Condition:} \\
According to Condition (\ref{sub_sssa}), we easily see that all $\gamma(\zeta_1, \zeta_2, \cdots, \zeta_m)$ can be set as a positive real value.
Then, the payoff of the first card in group $k$ is
$$
\frac{1}{m}(\sum_{\zeta_1=1}^{p_1} \cdots \sum_{\zeta_{k-1}}^{p_{k-1}} \sum_{\zeta_{k+1}}^{p_{k+1}} \cdots \cdots \sum_{\zeta_m}^{p_m} \gamma_{( \zeta_1, \cdots, \zeta_{k-1}, 1, \zeta_{k+1}, \cdots, \zeta_m )}).
$$
The payoff of the $j_{th}$ card in group $k$ is
\begin{equation}\label{eq_meili}
\begin{aligned}
&\frac{1}{m}(\sum_{\zeta_1=1}^{p_1} \cdots \sum_{\zeta_{k-1}}^{p_{k-1}} \sum_{\zeta_{k+1}}^{p_{k+1}} \cdots \cdots \sum_{\zeta_m}^{p_m} \gamma_{( \zeta_1, \cdots, \zeta_{k-1}, j, \zeta_{k+1}, \cdots, \zeta_m )})\\
=&\frac{1}{m}(\sum_{\zeta_1=1}^{p_1} \cdots \sum_{\zeta_{k-1}}^{p_{k-1}} \sum_{\zeta_{k+1}}^{p_{k+1}} \cdots \cdots \sum_{\zeta_m}^{p_m} \kappa_{k j} \gamma_{( \zeta_1, \cdots, \zeta_{k-1}, 1, \zeta_{k+1}, \cdots, \zeta_m )}).\\
\end{aligned}
\end{equation}
Thus, the second equation of Condition (\ref{eq_meili}) ensures that the payoff for choosing the $j_{th}$ card of group $k$ in combination with other cards is directly proportional to the payoff for choosing the first card of that group, with the ratio being $\kappa_{kj}$.

\end{proof}


\begin{lemma} \label{simul_lemma22a} 
If a real nonsquare matrix $A \in \mathbb{R}^{m \times n}$ is a simultaneous Volterra-Lyapunov stable matrix (see Definition \ref{def_NSQ_lyp}),
then, there exists a 
 block diagonal non-square matrix $K \in \mathbb{C}^{n \times m}$ such that for all non-negative diagonal matrices $E \in  \mathbb{R}^{n \times n}$, and $j \in \mathcal{M}$, where $\mathcal{M}$ is the index set consisting of $k$ tuples of integers in the
range $1,\cdots, m$,
\begin{equation}\label{suf_con_1}
Re\{ \sigma_i ( [A E K]_j ) \} >0,
\end{equation}
where $\text{Re} \{ \sigma_i (M) \}$ represents the real part of the $i$-th eigenvalue of matrix $M$.
\end{lemma}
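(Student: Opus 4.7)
The plan is to pick $K$ with strictly positive block entries and, for each non-negative diagonal $E$, to assemble a common Lyapunov certificate for $B := AEK$ by expanding $B$ combinatorially over the family of $m$-order squared matrices of $A$.

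First, choose $K$ of the block-diagonal form (\ref{integral_matrix}) with all $k_{i,l} > 0$; any such choice will work. Setting $c_{i,l} := \varepsilon_{i,l} k_{i,l} \ge 0$ and $C_i := \sum_{l=1}^{p_i} c_{i,l}$, Equation (\ref{eq_li_comb}) gives the $k$-th column of $B$ as $\sum_l c_{k,l}\boldsymbol{a}_{k,l}$. Following the product-form construction of Lemma~\ref{pailiezuhe_Lemma}, attach to each tuple $s = (l_1,\ldots,l_m)$ the weight $\gamma_s := \prod_{i=1}^{m} c_{i,l_i} \ge 0$; a straightforward column-by-column distributivity calculation then yields
\[
\sum_{s} \gamma_s\,[A]^m_{s} \;=\; B F, \qquad F := \mathrm{diag}\!\Bigl(\textstyle\prod_{j \ne k} C_j\Bigr)_{k=1}^{m}.
\]

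Next, assume for the moment that every $C_k > 0$, so that $F > 0$. By Definition~\ref{def_NSQ_lyp}, a single diagonal $D > 0$ satisfies $[A]^m_{s} D + D ([A]^m_{s})^T > 0$ for every $s$; taking the $\gamma_s$-weighted sum of these positive-definite inequalities and using that diagonal matrices commute ($FD = DF$) produces
\[
B P + P B^T \;>\; 0, \qquad P := FD > 0,
\]
a Lyapunov certificate that every eigenvalue of $B$ has strictly positive real part. For any $j \in \mathcal{M}$, $[BP + PB^T]_j = [B]_j [P]_j + [P]_j [B]_j^T > 0$ because principal submatrices of positive-definite matrices are positive definite, so the same conclusion holds for $[B]_j = [AEK]_j$, which is (\ref{suf_con_1}).

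Finally, handle the case $C_k = 0$ for one or more groups $k$: these correspond to entire control groups being switched off, and, per Definition~\ref{squaredmatrix} and \cite{campo1994achievable}, they shrink the admissible index set to the active coordinates $\{k : C_k > 0\}$. On those coordinates the identical expansion applies but now produces squared matrices of order $|j| < m$; since each such lower-order squared matrix is a principal submatrix of some $m$-order $[A]^m_{s}$, the corresponding principal submatrix of $D$ inherits the Lyapunov inequality, closing the argument. The main obstacle I expect is exactly this reduction: making precise that the simultaneous Lyapunov matrix $D$ restricts cleanly to every active sub-problem so that the combinatorial expansion and Lyapunov argument continue to work in lower dimensions.
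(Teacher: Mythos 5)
Your proposal is correct and follows the same underlying strategy as the paper's proof: express $AEK$ (up to a positive diagonal factor) as a non-negative combination of the $m$-order squared matrices and then exploit the common diagonal Lyapunov matrix $D$. The differences are in execution, and they are worth noting. The paper routes the argument through Lemma~\ref{pailiezuhe_Lemma} to construct normalized weights $\gamma_{(\zeta_1,\dots,\zeta_m)}=\prod_i \bar k_{i\zeta_i}$ with $\bar k_{ij}=\tilde k_{ij}/\tilde k_{i1}$, sets $d_i=\tilde k_{i1}$, and then invokes Lemma~\ref{lemma2} to conclude positivity of the spectra of $(\sum_i\gamma_i[A]^m_{s_i})D$ and its principal submatrices. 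Your version uses the unnormalized product weights $\gamma_s=\prod_i c_{i,l_i}$ and tracks the resulting diagonal factor $F=\mathrm{diag}(\prod_{j\neq k}C_j)$ explicitly, producing a direct Lyapunov certificate $P=FD$ for $B=AEK$ itself. This buys you two things the paper's write-up lacks: (i) you avoid the division by $\tilde k_{i1}$ (the paper must separately assume $\tilde k_{i1}\neq 0$), and (ii) you repair a small bookkeeping slip in the paper --- with the paper's choice $d_i=\tilde k_{i1}$ the $k$-th column of $(\sum_i\gamma_i[A]^m_{s_i})D$ actually differs from that of $AEK$ by the factor $\prod_{i\neq k}\sum_{\zeta_i}\bar k_{i\zeta_i}$, which is exactly your $F$ (harmless in the paper only because Lemma~\ref{lemma2} holds for an arbitrary positive diagonal, but never acknowledged there). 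Your treatment of principal submatrices via diagonality of $P$ and of the degenerate case $C_k=0$ by restriction to active coordinates matches the intent of Definition~\ref{squaredmatrix} and is as rigorous as the lemma's (somewhat loosely stated) conclusion permits.
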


\begin{proof}
Considering that $A$ is a simultaneous Volterra-Lyapunov stable matrix, the following inequality should be true:
\begin{equation}\label{eq_mdl}
[A]^m_{s_i} D + D ([A]^m_{s_i})^T  > 0. 
\end{equation}
Assume the total number of the $m_{th}$-order squared matrices $[A]^m_{s_i}$ is $N$, i.e., $i \in \{1,2, \cdots, N\}$. Based on inequality (\ref{eq_mdl}), we can conclude, for any $\gamma_i>0$, $i \in \{1, \cdots, N\}$, the following inequality holds
\begin{equation} \label{al_1}
\begin{aligned}
(\gamma_1 [A]^m_{s_1} + \gamma_2 [A]^m_{s_2} \cdots + \gamma_N [A]^m_{s_N})D & +\\
 D (\gamma_1 [A]^m_{s_1} + \gamma_2 [A]^m_{s_2} \cdots + \gamma_N [A]^m_{s_N})^T & >0
\end{aligned}
\end{equation}

Based on Lemma \ref{lemma2}, for any diagonal positive matrix $D > 0$ and $j \in \mathcal{M}$, we have
\begin{equation}\label{eq_important}
Re\{ \sigma [ (\sum_{i=1}^{N} \gamma_i  [A]^m_{s_i})  D]_j \} >0.
\end{equation}

Without loss of generality, we only consider the proof for the $m_{th}$-order matrices. So, we omit the
subscript $j$ in the following discussions.

According to structure of $K$, as before, we redefine the sub-indices of the $n$ columns of $A$ as follows: 
$$A=[\boldsymbol{a}_{1,1}, \cdots, \boldsymbol{a}_{1,p_1}, \boldsymbol{a}_{2,1}, \cdots, \boldsymbol{a}_{2,p_2}, \cdots, \cdots, \boldsymbol{a}_{m,1} \cdots,\boldsymbol{a}_{m,p_m}].$$ 

Correspondingly, we redefine the sub-indices of $\gamma_i>0$, $i \in \{1, \cdots, N\}$ in inequality (\ref{al_1}) as follows;
\begin{equation}
\gamma_{(\zeta_1, \zeta_2, \cdots, \zeta_m)},    
\end{equation}
where $\zeta_i \in \{1,2,\cdots,p_i\}$. The coefficient $\gamma_{(\zeta_1, \zeta_2, \cdots, \zeta_m)}$ means from each of the following group of vectors to select one vector to correspond to a term $\gamma_{(\zeta_1, \zeta_2, \cdots, \zeta_m)} [A]^m_{s_i}$ in inequality  (\ref{eq_important}):
\begin{equation}
\begin{aligned}
 Group\,1: & \,\,  \boldsymbol{a}_{1,1}, \cdots, \boldsymbol{a}_{1,p_1},\\ 
 Group\,2:  & \,\, \boldsymbol{a}_{2,1}, \cdots, \boldsymbol{a}_{2,p_2}, \\
  &\cdots, \cdots, \\
 Group\,m: & \,\, \boldsymbol{a}_{m,1} \cdots,\boldsymbol{a}_{m,p_m}. \\   
\end{aligned}
\end{equation}

Further assume $D=diag\{[d_1,d_2,\cdots, d_m]\}$.  Then the core part of the equation (\ref{eq_important}) can be expressed as follows:
\begin{equation} \label{carefu}
\begin{aligned}
     (\sum_{i=1}^{N} \gamma_i  [A]^m_{s_i})  D  
=[& d_1 ( \sum_{j=1}^{p_1} (\sum_{\zeta_2=1}^{p_2} \sum_{\zeta_3=1}^{p_3} \cdots \sum_{\zeta_m}^{p_m} \gamma_{( j, \zeta_2, \cdots, \zeta_m )} ) \boldsymbol{a}_{1,j} ), \\
     & d_2 ( \sum_{j=1}^{p_2} (\sum_{\zeta_1=1}^{p_1} \sum_{\zeta_3=1}^{p_3} \cdots \sum_{\zeta_m}^{p_m} \gamma_{( \zeta_1, j, \zeta_3, \cdots, \zeta_m )} ) \boldsymbol{a}_{2,j} ), \\
     & \cdots \cdots \cdots \\
      & d_m  ( \sum_{j=1}^{p_m} (\sum_{\zeta_1=1}^{p_1} \sum_{\zeta_2=1}^{p_2} \cdots \sum_{\zeta_{(m-1)}}^{p_{(m-1)}} \gamma_{( \zeta_1, \zeta_2, \cdots, \zeta_{(m-1)},j)} ) \boldsymbol{a}_{m,j} ) ].\\  
\end{aligned}
\end{equation}

On the other hand, the matrix $AEK$ can be expressed as 
\begin{equation}\label{eq_li_comb1}
\begin{aligned}
AEK &= [ \sum_{j=1}^{p_1} \varepsilon_{1j} k_{1j} \boldsymbol{a}_{1,j}, \sum_{j=1}^{p_2} \varepsilon_{2j} k_{2j} \boldsymbol{a}_{2,j}, \cdots, \sum_{j=1}^{p_m} \varepsilon_{mj} k_{mj} \boldsymbol{a}_{m,j}]\\
& = [ \sum_{j=1}^{p_1} \tilde {k}_{1j} \boldsymbol{a}_{1,j}, \sum_{j=1}^{p_2} \tilde {k}_{2j} \boldsymbol{a}_{2,j}, \cdots, \sum_{j=1}^{p_m} \tilde {k}_{mj} \boldsymbol{a}_{m,j}]\\
& = [ \tilde {k}_{11} \sum_{j=1}^{p_1} \frac{\tilde {k}_{1j}}{\tilde {k}_{11}}  \boldsymbol{a}_{1,j}, \tilde {k}_{21} \sum_{j=1}^{p_2} \frac{\tilde {k}_{2j}}{\tilde {k}_{21}}  \boldsymbol{a}_{2,j}, \cdots, \tilde {k}_{m1} \sum_{j=1}^{p_m} \frac{\tilde {k}_{mj}}{\tilde {k}_{m1}} \boldsymbol{a}_{m,j}]\\
& = [ \tilde {k}_{11} \sum_{j=1}^{p_1} \bar {k}_{1j} \boldsymbol{a}_{1,j}, \tilde {k}_{21} \sum_{j=1}^{p_2} \bar {k}_{2j} \boldsymbol{a}_{2,j}, \cdots, \tilde {k}_{m1} \sum_{j=1}^{p_m} \bar {k}_{mj} \boldsymbol{a}_{m,j}].\\
\end{aligned}
\end{equation}
In the above equation, $\tilde {k}_{ij} = \varepsilon_{i,j} k_{i,j} \ge 0$, and  $\bar {k}_{ij} = \frac{\tilde {k}_{i,j}}{\tilde {k}_{i,1}}$ (considering the case $\tilde {k}_{i,1} \ne 0$), which implies $\bar {k}_{i1}=1$.

Now, according to Lemma \ref{pailiezuhe_Lemma}, in Equation (\ref{carefu}), if we let $d_i = \tilde{k}_{i1}$ ($i \in \{1,2,\cdots,m\}$), and
\begin{equation}\label{sub_s}
\begin{cases}
\begin{aligned}
&\gamma_{(1, 1, \cdots, 1)} = 1 \\
&\gamma_{( \zeta_1, \zeta_2, \cdots, \zeta_{(i-1)}, j, \zeta_{(i+1)}, \cdots, \zeta_{m})} = \gamma_{( \zeta_1, \zeta_2, \cdots, \zeta_{(i-1)}, 1, \zeta_{(i+1)}, \cdots, \zeta_{m})} \bar{k}_{ij},
\end{aligned}
\end{cases}
\end{equation}
the left-hand side of Equation (\ref{carefu}) equals that of Equation (\ref{eq_li_comb1}). Thus, we can conclude, that for any non-negative diagonal matrices $E \in  \mathbb{R}^{n \times n}$, the inequality (\ref{suf_con_1}) is always satisfied.
\end{proof}

%


\begin{note}
Lemma \ref{simul_lemma22a} confirmed that a non-square simultaneously Volterra-Lyapunov stable matrix is also a non-square D-stable-like matrix. Simultaneously Volterra-Lyapunov stable matrix implies that there exists a common diagonal Lyapunov function for all its $m$-order squared matrices. This condition is relatively strong, which can even guarantee the instantaneous switching stability among all these squared matrices.    
\end{note}

\begin{note}
The simultaneous Volterra-Lyapunov stability condition in Lemma \ref{simul_lemma22a} is a strong condition especially for the case when the total number of non-squared matrices, $N$, is big. The following theorem relaxed the simultaneous Volterra-Lyapunov stability to individual Volterra-Lyapunov stability. 
\end{note}

\begin{lemma} \label{lem2} \cite{Cross:-Thre}
For normal matrices and in the class $\mathbb{F} = \{A : a_{ij} \ge 0 \,\,for \,\,all \,\, i \ne j \}$, D-stability and Volterra-Lyapunoc stability are all equivalent to stability. 
\end{lemma}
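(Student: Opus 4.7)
My plan is to establish the full equivalence by completing a cyclic chain of implications. Two of the three links are valid for arbitrary real matrices and can be dispatched first, leaving only the most restrictive implication to verify on each of the two distinguished classes.

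First I would record the generic directions: Volterra-Lyapunov stability implies D-stability (which is the content of Lemma \ref{lemma2} when $\mathcal{M}$ is restricted to the full index set, the given diagonal $D$ itself furnishing $\operatorname{Re}\{\sigma([AD']_i)\} > 0$ for every positive diagonal $D'$ via a standard congruence manipulation), and D-stability implies stability (take $D = I$ in the definition of D-stability). Neither of these steps depends on normality or the sign pattern, so the remaining task reduces to showing that, on each of the two classes, stability in isolation already forces Volterra-Lyapunov stability.

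For the normal case, I would exploit the fact that a real normal matrix is orthogonally block-diagonalisable into $1 \times 1$ real scalars and $2 \times 2$ scaled-rotation blocks; consequently the spectrum of the symmetric part $\frac{1}{2}(A + A^{T})$ coincides with the multiset of real parts of the eigenvalues of $A$. If $A$ is stable in the convention used throughout the paper (eigenvalues in the open right half-plane), then $A + A^{T} > 0$, so $D = I$ certifies Volterra-Lyapunov stability and the chain closes.

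The class $\mathbb{F}$ is where I expect the real obstacle. A matrix $A \in \mathbb{F}$ has nonnegative off-diagonal entries, which is the Metzler sign pattern, so the extremal eigenvalue is real and admits a nonnegative right eigenvector by Perron-Frobenius-type reasoning. Assuming stability, I would then construct the required positive diagonal $D$ by a sign-pattern-preserving induction on the dimension: reduce via a Schur complement at the $(1,1)$ position, verify that the complement remains in $\mathbb{F}$ and remains stable, solve the smaller problem inductively, and reassemble $D$ from the base eigenvector together with the inductively produced block. The delicate combinatorial point is the preservation of both the sign pattern and the spectral condition under the Schur reduction; rather than reproduce that bookkeeping in full here, I would appeal to the classical equivalence established in the cited reference and content myself with indicating the inductive scaffolding.
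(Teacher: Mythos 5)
The paper offers no proof of this lemma at all --- it is quoted verbatim (up to a change of stability convention) from \cite{Cross:-Thre} --- so your attempt has to stand on its own. Your two generic links are fine: Volterra--Lyapunov stability implies D-stability by the standard manipulation $(AE)(E^{-1}D)+(E^{-1}D)(AE)^{T}=AD+DA^{T}>0$ together with the similarity of $AE$ and $EA$, and D-stability trivially implies stability. Your normal case is also correct and complete under the convention the paper actually forces (the definition $AD+DA^{T}>0$ certifies eigenvalues in the \emph{open right half-plane}, so ``stable'' here must mean positively stable), since for a real normal $A$ the spectrum of $\tfrac{1}{2}(A+A^{T})$ is exactly the multiset of real parts of the spectrum of $A$.

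The gap is in the class $\mathbb{F}$, and it is twofold. The lesser problem is that you do not actually close that case: deferring to ``the classical equivalence established in the cited reference'' is deferring to the very statement under proof. The substantive problem is that your sketch silently switches stability conventions midstream. Perron--Frobenius applied to $A+tI\ge 0$ controls the \emph{rightmost} eigenvalue of a matrix with nonnegative off-diagonal entries, and the M-matrix/Schur-complement machinery you invoke proves ``Hurwitz $\Rightarrow$ diagonally stable'' for Metzler matrices --- i.e.\ the theorem under the \emph{negative} stability convention of \cite{Cross:-Thre} --- whereas your normal-matrix argument uses the positive convention. Under that positive convention the clause for $\mathbb{F}$ as literally stated is false. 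Take
\[
A=\begin{pmatrix} -1 & 1 & 0\\ 0 & 3 & 1\\ 10 & 0 & 3\end{pmatrix},
\]
which lies in $\mathbb{F}$ and has characteristic polynomial $\lambda^{3}-5\lambda^{2}+3\lambda-1$; by Routh--Hurwitz applied to $\lambda^{3}+5\lambda^{2}+3\lambda+1$ all three eigenvalues have positive real part, so $A$ is stable in the paper's sense, yet $(AD+DA^{T})_{11}=-2d_{1}<0$ for every positive diagonal $D$, so $A$ is not Volterra--Lyapunov stable (and $D=\mathrm{diag}(1,\epsilon,\epsilon)$ with $\epsilon$ small sends one eigenvalue of $DA$ near $-1$, so $A$ is not D-stable either). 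A correct proof must first repair the statement --- either flip the stability convention to Hurwitz for this clause or replace the class by $\{A: a_{ij}\le 0,\ i\ne j\}$ --- and then the standard argument is direct rather than inductive: for a Hurwitz Metzler $A$, choose $x>0$ with $Ax<0$ and set $D=\mathrm{diag}(x)$, so that $AD+DA^{T}$ is strictly diagonally dominant with negative diagonal, hence negative definite.
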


\begin{lemma} \label{lem33} 
For a set of normal matrices which are also in the class $\mathbb{F} = \{A : a_{ij} \ge 0 \,\,for \,\,all \,\, i \ne j \}$, D-stability, individual Volterra-Lyapunoc stability, and simultaneous Volterra-Lyapunoc stability are all equivalent to stability. 
\end{lemma}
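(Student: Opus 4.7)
The plan is to set up a cyclic chain of implications among the four properties in question — stability of every matrix in the set, D-stability of every matrix, individual Volterra--Lyapunov stability, and simultaneous Volterra--Lyapunov stability — and close the loop via a special property of real normal matrices.

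First I would invoke Lemma \ref{lem2} matrix by matrix. Since each element of the set is normal and lies in the class $\mathbb{F}$, the hypotheses of Lemma \ref{lem2} hold for every member, so for each fixed matrix stability, D-stability, and (single-matrix) Volterra--Lyapunov stability are equivalent. Consequently, ``every matrix in the set is stable'' is equivalent to ``every matrix is D-stable'' and to the existence of an individual diagonal $D_i > 0$ per matrix with $A_i D_i + D_i A_i^T > 0$, which is precisely individual Volterra--Lyapunov stability in the sense of Definition \ref{def_NSQ_lyp_ind}. The implication from simultaneous to individual Volterra--Lyapunov stability is trivial: take $D_i = D$ for each $i$.

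The only real content lies in the reverse implication, individual $\Rightarrow$ simultaneous Volterra--Lyapunov stability, and my plan is to exhibit a universal diagonal matrix that works for the entire set, namely $D = I$. The key fact is that for any real normal matrix $A$ one has $A + A^T > 0$ if and only if every eigenvalue of $A$ has positive real part. This follows from the real orthogonal canonical form of a normal matrix, which decomposes $A$ into $1 \times 1$ blocks for real eigenvalues and $2 \times 2$ rotation-scaling blocks for complex conjugate pairs $a \pm i b$; in that orthonormal basis $A + A^T$ becomes block-diagonal with all diagonal entries equal to $2a$, so its eigenvalues are exactly $2\,\mathrm{Re}(\lambda_k(A))$. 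Applying this pointwise to each $A_i$ in the set, stability of every $A_i$ produces $A_i + A_i^T > 0$ for all $i$, and therefore the common choice $D = I$ realizes $A_i D + D A_i^T > 0$ for every $i$ simultaneously, which is exactly simultaneous Volterra--Lyapunov stability.

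The main obstacle I anticipate is stylistic rather than mathematical: producing a clean justification of the spectral identification of $A + A^T$ for real normal matrices, and making explicit that the universal $D = I$ argument relies only on normality (the class $\mathbb{F}$ and the normality assumption enter only through Lemma \ref{lem2} and through this spectral identity, not through any further structural property of the squared matrices $[A]^m_{s_i}$). Once that observation is in place, the four properties collapse onto one another and the lemma follows.
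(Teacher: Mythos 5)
Your proposal is correct, and it actually supplies more than the paper does: the paper's entire proof of this lemma is the single sentence ``Based on Lemma \ref{lem2}, this lemma can be easily proved,'' which covers the per-matrix equivalences but silently skips the only nontrivial step, namely passing from individual to simultaneous Volterra--Lyapunov stability (Lemma \ref{lem2} is a statement about one matrix at a time and cannot by itself produce a \emph{common} diagonal $D$ for the whole set). Your argument closes exactly that gap: the observation that for a real normal matrix the eigenvalues of $A + A^T$ are $2\,\mathrm{Re}(\lambda_k(A))$ (via the real orthogonal block-diagonal form) shows that stability alone forces $A + A^T > 0$, so the single choice $D = I$ certifies simultaneous Volterra--Lyapunov stability for every normal stable matrix in the set. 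This is the right mechanism, and it makes explicit that normality is what buys the universal $D$, while membership in $\mathbb{F}$ is only needed to invoke Lemma \ref{lem2} (and, as you note, is not needed for the $D = I$ step at all). The one cosmetic point worth flagging is the sign convention: the paper uses ``stable'' to mean all eigenvalues have positive real part (consistent with its condition $\mathrm{Re}\{\sigma_i\} > 0$ and its definition of Volterra--Lyapunov stability as $AD + DA^T > 0$), and your spectral identity is stated in that same convention, so the chain of implications is internally consistent.
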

\begin{proof}
Based on Lemma \ref{lem2}, this lemma can be easily proved.   
\end{proof}

\begin{theorem}  \label{mainreslt}
If any of the $m_{th}$-order squared matrices of  $A \in \mathbb{R}^{m \times n}$ is a normal matrix and in the class  $\mathbb{F}$, and also D-stable matrix,
then there exists a block diagonal non-square matrix $K \in \mathbb{C}^{n \times m}$ such that for all non-negative diagonal matrices $E \in  \mathbb{R}^{n \times n}$, and $j \in \mathcal{M}$, where $\mathcal{M}$ is the index set consisting of $k$ tuples of integers in the
range $1,\cdots, m$,
\begin{equation}\label{suf_con}
Re\{ \sigma_i ( [A E K]_j ) \} >0,
\end{equation}
where $\text{Re} \{ \sigma_i (M) \}$ represents the real part of the $i$-th eigenvalue of matrix $M$.
\end{theorem}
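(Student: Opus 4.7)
The plan is to reduce the theorem to a short chain of two previously established results, Lemma \ref{lem33} and Lemma \ref{simul_lemma22a}. I read the hypothesis ``any of the $m$-th order squared matrices of $A$ is normal, in class $\mathbb{F}$, and D-stable'' as the universal statement that \emph{every} squared matrix $[A]^m_{s_i}$ satisfies these three properties, which is the only reading consistent with the preceding development and with the way the collection $\{[A]^m_{s_i}\}$ enters Definitions \ref{def_NSQ_lyp_ind} and \ref{def_NSQ_lyp}.

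First, I would collect the full family $\{[A]^m_{s_i}\}_{i=1}^{N}$ of $m$-th order squared matrices of $A$, where $N = \prod_{i=1}^{m} p_i$ as in Definition \ref{squaredmatrix}. By hypothesis, this is a set of normal matrices lying in $\mathbb{F}$, each of which is D-stable. Lemma \ref{lem33} asserts that for such a set, D-stability, individual Volterra-Lyapunov stability, and simultaneous Volterra-Lyapunov stability all coincide with standard stability. In particular, there exists a \emph{single} diagonal positive matrix $D > 0$ such that
\[
[A]^m_{s_i}\, D + D\, ([A]^m_{s_i})^T > 0 \qquad \text{for every } i \in \{1, 2, \ldots, N\}.
\]

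By Definition \ref{def_NSQ_lyp}, the existence of this common $D$ is precisely the statement that $A$ is a simultaneous Volterra-Lyapunov stable non-square matrix. Hence Lemma \ref{simul_lemma22a} applies directly to $A$, producing the desired block-diagonal non-square matrix $K \in \mathbb{C}^{n \times m}$ such that $\mathrm{Re}\{\sigma_i([A E K]_j)\} > 0$ for every non-negative diagonal matrix $E \in \mathbb{R}^{n \times n}$ and every $j \in \mathcal{M}$, which is exactly the claimed conclusion.

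The proof is essentially a composition of prior results, so no genuine computational obstacle arises. The only delicate point, and the step I would flag as the main substantive move, is invoking Lemma \ref{lem33} in the correct (strong) direction: the lemma must be used to pass from individual D-stability of each $[A]^m_{s_i}$ to \emph{simultaneous} Volterra-Lyapunov stability with a single common $D$, rather than only to a collection of possibly different $D_i$'s. This strengthening from individual to simultaneous stability is exactly what the normality and $\mathbb{F}$-class assumptions buy, and it is precisely the hypothesis required to feed Lemma \ref{simul_lemma22a} rather than a weaker individual-stability variant that would not suffice for the uniform ``for all $E$'' conclusion.
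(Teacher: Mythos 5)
Your proposal is correct and follows essentially the same route as the paper: invoke Lemma \ref{lem33} to upgrade the normality, class-$\mathbb{F}$, and D-stability hypotheses on the squared matrices to simultaneous Volterra-Lyapunov stability of $A$ (Definition \ref{def_NSQ_lyp}), and then conclude via Lemma \ref{simul_lemma22a}. Your version is somewhat more explicit than the paper's two-line argument about why a \emph{common} $D$ is what Lemma \ref{lem33} must deliver, but the logical content is identical.
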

\begin{proof}
Based on Lemma \ref{lem33}, both individual and simultaneous Volterra-Lyapunoc stability of the matrix $A$ can be ensured. Then,  Lemma \ref{simul_lemma22a} can directly prove this theorem.
\end{proof}

\begin{conjecture}
If a real nonsquare matrix $A \in \mathbb{R}^{m \times n}$ is an individual Volterra-Lyapunov stable matrix (see Definition \ref{def_NSQ_lyp_ind}),
then, there exists a 
 block diagonal non-square matrix $K \in \mathbb{C}^{n \times m}$ such that for all non-negative diagonal matrices $E \in  \mathbb{R}^{n \times n}$, and $j \in \mathcal{M}$, where $\mathcal{M}$ is the index set consisting of $k$ tuples of integers in the
range $1,\cdots, m$,
\begin{equation}\label{suf_con}
Re\{ \sigma_i ( [A E K]_j ) \} >0,
\end{equation}
where $\text{Re} \{ \sigma_i (M) \}$ represents the real part of the $i$-th eigenvalue of matrix $M$.
\end{conjecture}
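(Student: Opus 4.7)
The plan is to mirror the argument of Lemma \ref{simul_lemma22a} as far as the weaker hypothesis permits. Given any candidate block diagonal $K$ and any non-negative diagonal $E$, I would first reproduce the decomposition in equation (\ref{eq_li_comb1}), writing each column of $AEK$ as a non-negative combination $\tilde k_{i1}\sum_{j=1}^{p_i}\bar k_{ij}\boldsymbol{a}_{i,j}$. Applying Lemma \ref{pailiezuhe_Lemma} with $\kappa_{ij}=\bar k_{ij}$ produces coefficients $\gamma_{(\zeta_1,\ldots,\zeta_m)}>0$ that satisfy the ratio-consistency conditions (\ref{sub_sssa}) and express $AEK$ as $\bigl(\sum_{i=1}^{N}\gamma_i[A]^m_{s_i}\bigr)D$ with $D=\operatorname{diag}(\tilde k_{11},\ldots,\tilde k_{m1})$. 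This reduction of the statement is purely combinatorial and therefore survives the weaker hypothesis.

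At this point the argument for the simultaneous case closed by appealing to (\ref{al_1}) with a single common $D$, which is exactly the structure that the individual Volterra-Lyapunov hypothesis does not supply. Under Definition \ref{def_NSQ_lyp_ind}, each squared matrix $[A]^m_{s_i}$ admits its own $D_i>0$, and a non-negative combination $\sum_i\gamma_i[A]^m_{s_i}$ inherits no uniform Lyapunov certificate. Since convex combinations of D-stable matrices need not be D-stable, the inequality $\bigl(\sum_i\gamma_i[A]^m_{s_i}\bigr)D+D\bigl(\sum_i\gamma_i[A]^m_{s_i}\bigr)^T>0$ cannot be proved by any direct summation of the per-matrix Lyapunov inequalities.

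To bypass this obstruction I would exploit the one degree of freedom the conjecture leaves us: $K$ is existentially quantified. The goal becomes to \emph{design} $K$ so that, for every admissible $E$, the induced coefficient tuple $(\gamma_i)$ and the induced diagonal $D$ are compatible with a single Lyapunov matrix. Two routes seem natural. The first is a continuity and homotopy approach: when $E$ activates exactly one $\varepsilon_{i,j}$ per group, $AEK$ coincides up to diagonal scaling with some $[A]^m_{s_i}$, which is D-stable by Lemma \ref{lemma2} applied to $D_i$; one then tries to continuously deform $E$ between such extreme configurations and argue that eigenvalues cannot cross the imaginary axis, using the ratio constraints imposed by $K$ to restrict the accessible combinations. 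The second is to search for a $K$ so that the simplex of attainable $(\gamma_i)$ lies in the common D-stability cone of $\{[A]^m_{s_i}\}$, perhaps by choosing the $k_{i,j}$ to weight each group in inverse proportion to directions where the $D_i$ disagree; here the auxiliary assumptions used in Theorem \ref{mainreslt} (normality and class $\mathbb{F}$) are natural fallbacks since they collapse individual to simultaneous stability via Lemma \ref{lem33}.

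The main obstacle is exactly the failure of D-stability to be preserved under positive combinations, and this is almost certainly why the authors leave the statement as a conjecture rather than a theorem. Any successful proof will need a genuinely new ingredient beyond the summation trick of Lemma \ref{simul_lemma22a}: either a structural property of the family $\{[A]^m_{s_i}\}$ arising from the shared non-square parent $A$ that forces simultaneous Lyapunov stability, or a clever $K$-construction that effectively selects a convex subregion on which a common $D$ does exist. My working plan would be to attempt route two on small examples ($m=2$, $p_1=p_2=2$) to either produce such a $K$ explicitly or to expose a counterexample; the outcome on this base case will indicate whether the conjecture holds in full generality, requires the normal/$\mathbb{F}$-class hypotheses of Theorem \ref{mainreslt}, or needs further strengthening.
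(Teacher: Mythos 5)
The statement you were asked to prove is presented in the paper as a \emph{conjecture}, and the paper supplies no proof of it; the authors only append a note remarking that it becomes provable under extra hypotheses (for instance, when the redundant inputs are confined to a single channel) and that the general case is ``complex.'' So there is no paper proof to compare against, and your submission, candidly, is not a proof either --- it is a diagnosis plus a research plan. Judged on those terms, your diagnosis is accurate and matches the paper's own implicit reasoning. You correctly observe that the combinatorial reduction via Lemma \ref{pailiezuhe_Lemma} and Equation (\ref{eq_li_comb1}) survives the weakening from simultaneous to individual Volterra-Lyapunov stability, and that the argument breaks precisely at inequality (\ref{al_1}), where the proof of Lemma \ref{simul_lemma22a} sums the per-matrix Lyapunov inequalities against a single common $D$. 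Under Definition \ref{def_NSQ_lyp_ind} each $[A]^m_{s_i}$ carries its own $D_i$, the summation trick is unavailable, and positive combinations of D-stable matrices need not be D-stable. Your observation that the existential quantifier on $K$ is the only remaining degree of freedom, and that the normal/class-$\mathbb{F}$ hypotheses of Theorem \ref{mainreslt} are exactly the known way to collapse individual to simultaneous stability via Lemma \ref{lem33}, is consistent with how the paper itself routes around the conjecture.

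The genuine gap is simply that neither of your two proposed routes is carried out. The homotopy route needs an argument that eigenvalues cannot cross the imaginary axis as $E$ deforms between single-column configurations, and nothing in the paper (or in your sketch) supplies a barrier certificate for that; the one-channel case mentioned in the paper's note works precisely because the deformation then stays inside a single pencil $[A]^m_{s_i} + t([A]^m_{s_j}-[A]^m_{s_i})$ where a cruder argument suffices. The cone-containment route requires showing that the simplex of attainable coefficient tuples $(\gamma_i)$, which is constrained by the ratio conditions (\ref{sub_sssa}) and hence is a low-dimensional subvariety rather than the full simplex, can be steered by the choice of $k_{i,j}$ into a region admitting a common diagonal Lyapunov matrix --- and whether that is always possible is exactly the open content of the conjecture. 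Your plan to test $m=2$, $p_1=p_2=2$ is the right next step, but as submitted the proposal establishes nothing beyond what the paper already concedes: the statement remains open.
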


\begin{note}
If add extra conditions, the above conjecture can be proved. For example, if the limited number of redundant inputs is only in one channel. Then, the proof becomes straightforward. However, generally, the proof of this theorem is complex.  
\end{note}

\section{CONCLUSION}
In summary, we extended the concepts of key stable square matrices, such as D-stable and Volterra-Lyapunov stable matrices, to non-square cases. This was motivated by distributed control problems like decentralized stability and controllability in non-square processes. We established links between these square and non-square matrices and proposed conjectures for further research.

\bibliography{VO2}

\end{document}